\journal{Indagationes Mathematicae}
\renewcommand{\div}{{\textrm{div}\:}}
\newcommand{\Q}{\mathbb{Q}}
\newcommand{\QQ}{\mathbb{Q}}
\newcommand{\Qbar}{{\overline{\Q}}}
\newcommand{\DD}{\mathcal{D}_m}
\newcommand{\Dbar}{\overline{\mathcal{D}}_m}
\newcommand{\Gal}{\textrm{Gal}(\Qbar/\Q)}
\theoremstyle{plain}
\newtheorem{theorem}{Theorem}
\newtheorem{proposition}[theorem]{Proposition}
\newtheorem{lemma}[theorem]{Lemma}
\theoremstyle{remark}
\newtheorem{remark}[theorem]{Remark}
\begin{document}

\begin{frontmatter}

\title{Rational $D(q)$-quadruples}

\author{Goran Dra\v zi\' c\fnref{fnote1}}
\ead{gdrazic@pbf.hr} 
\author{Matija Kazalicki\fnref{fnote2}}
\ead{matija.kazalicki@math.hr}
\fntext[fnote1]{Faculty of Food Technology and Biotechnology, University of Zagreb, Pierottijeva 6, 10000 Zagreb, Croatia.}
\fntext[fnote2]{Department of Mathematics, University of Zagreb, Bijeni\v cka cesta 30, 10000 Zagreb,
Croatia.}

\begin{abstract}
For a rational number $q$, a  \emph{rational $D(q)$-$n$-tuple} is a set of $n$ distinct nonzero rationals $\{a_1, a_2, \dots, a_n\}$ such that $a_ia_j+q$ is a square for all $1 \leqslant i < j \leqslant n$. For every $q$ we find all rational $m$ such that there exists a $D(q)$-quadruple with product $a_1a_2a_3a_4=m$. We describe all such quadruples using points on a specific elliptic curve depending on $(q,m).$
\end{abstract}

\begin{keyword}
Diophantine $n$-tuples\sep Diophantine quadruples\sep Elliptic curves \sep Rational Diophantine $n$-tuples
\end{keyword}

\end{frontmatter}

\linenumbers

\section{Introduction}

Let $q\in \QQ$ be a nonzero rational number. A set of $n$ distinct nonzero rationals $\lbrace a_1, a_2, \dots, a_n\rbrace$ is called a rational $D(q)$-$n$-tuple if $a_ia_j+q$ is a square for all $1 \leqslant i<j\leqslant n.$ If $\lbrace a_1,a_2,\dots,a_n\rbrace$ is a rational $D(q)$-$n$-tuple, then for all $r\in \QQ,\lbrace ra_1,ra_2,\dots,ra_n\rbrace$ is a $D(qr^2)$-$n$-tuple, since $(ra_1)(ra_2)+qr^2=(a_1a_2+q)r^2$. With this in mind, we restrict to square-free integers $q.$ If we set $q=1$ then such sets are called rational Diophantine $n$-tuples.

The first example of a rational Diophantine quadruple was the set $$\left\lbrace\frac{1}{16}, \frac{33}{16}, \frac{17}{4}, \frac{105}{16}\right\rbrace$$ found by Diophantus, while the first example of an integer Diophantine quadruple, the set 
\[
\lbrace 1,3,8,120 \rbrace
\] is due to Fermat. 

In the case of integer Diophantine $n$-tuples, it is known that there are infinitely many Diophantine quadruples (e.g. $\{k-1, k+1, 4k, 16k^3-4k\},$ for  $k\geq 2$). Dujella \cite{dujella2004there} showed there are no Diophantine sextuples and only finitely many Diophantine quintuples, while recently  He, Togb\' e and Ziegler \cite{he2019there} proved there are no integer Diophantine quintuples, which was a long standing conjecture.

Gibbs \cite{gibbs2006some} found the first example of a rational Diophantine sextuple using a computer, and Dujella, Kazalicki, Miki\' c and Szikszai \cite{dujella2017there} constructed infinite families of rational Diophantine sextuples. Dujella and Kazalicki parametrized Diophantine quadruples with a fixed product of elements using triples of points on a specific elliptic curve, and used that parametrization for counting Diophantine quadruples over finite fields \cite{dujella2016diophantine} and for constructing rational sextuples \cite{dujella2017more}. There is no known rational Diophantine septuple. 

Regarding rational $D(q)$-$n$-tuples, Dujella \cite{dujella2000note} has shown that there are infinitely many rational $D(q)$-quadruples for any $q\in \QQ.$ Dujella and Fuchs in \cite{dujella2012problem} have shown that, assuming the Parity Conjecture, for infinitely squarefree integers $q\neq 1$ there exist infinitely many rational $D(q)$-quintuples. There is no known rational $D(q)$-sextuple for $q\neq a^2, a\in \QQ.$

Our work uses a similar approach Dujella and Kazalicki had in \cite{dujella2016diophantine} and \cite{dujella2017more}.

Let $\{a,b,c,d\}$ be a rational $D(q)$-quadruple, for a fixed nonzero rational $q,$ such that $$ ab+q=t_{12}^2,\quad ac+q=t_{13}^2,\quad ad+q=t_{14}^2,$$ $$bc+q=t_{23}^2,\quad bd+q=t_{24}^2,\quad cd+q=t_{34}^2.$$

Then $(t_{12},t_{13},t_{14},t_{23},t_{24},t_{34},m=abcd)\in \QQ^7$ defines a rational point on the algebraic variety $\mathcal{C}$ defined by the equations $$(t_{12}^2-q)(t_{34}^2-q)=m,$$  $$(t_{13}^2-q)(t_{24}^2-q)=m,$$ $$(t_{14}^2-q)(t_{23}^2-q)=m.$$

The rational points $(\pm t_{12}, \pm t_{13}, \pm t_{14}, \pm t_{23}, \pm t_{24}, \pm t_{34}, m)$ on $\mathcal{C}$ determine two rational $D(q)$ quadruples $\pm (a,b,c,d)$ $\Big($specifically,  $a^2=\frac{(t_{12}^2-q)(t_{13}^2-q)}{t_{23}^2-q}\Big)$ if $a,b,c,d$ are rational, distinct and nonzero. 

Any point $(t_{12},t_{13},t_{14},t_{23},t_{24},t_{34},m) \in\mathcal{C}$ corresponds to three points $Q'_1=(t_{12},t_{34}), Q'_2=(t_{13},t_{24})$ and  $Q'_3=(t_{14},t_{23})$ on the curve\[
\mathcal{D}_m\colon (X^2-q)(Y^2-q)=m.
\]
If $\DD(\QQ)=\emptyset,$ there are no rational $D(q)$-quadruples with product of elements equal to $m,$ so we assume there exists a point $P_1=(x_1,y_1) \in \DD(\QQ).$

The curve $\DD$ is a curve of genus $1$ unless $m=0$ or $m=q^2,$ which we assume from now on. Since we also assumed a point $P_1 \in \DD(\QQ),$ the curve $\DD$ is birationally equivalent to the elliptic curve 
\[
E_m\colon W^2=T^3+(4q^2-2m)T^2+m^2T
\] via a rational map $f\colon \mathcal{D}_m \to E_m$ given by 
\begin{align*}
T&=(y_1^2-q)\cdot\frac{2x_1(y^2-q)x+(x_1^2+q)y^2+x_1^2y_1^2-2x_1^2q-y_1^2q}{(y-y_1)^2}, \\
W&=T\cdot\frac{2y_1x(q-y^2)+2x_1y(q-y_1^2)}{y^2-y_1^2}.
\end{align*}
Note that $f$ maps $(x_1,y_1)$ to the point at infinity $\mathcal{O}\in E_m(\QQ)$, it maps $(-y_1,x_1)$ to a point of order four, $R=(m,2mq) \in E_m(\QQ)$, 
and maps $(-x_1,y_1)$ to 
\[
S=\left(\frac{y_1^2(x_1^2-q)^2}{x_1^2},\frac{qy_1(x_1^2+y_1^2)(x_1^2-q)^2}{x_1^3}\right)\in E_m(\QQ),
\] which is generically a point of infinite order.

We have the following associations
\[
(a,b,c,d)\dashleftarrow \rightarrow \text{a point on }\mathcal{C}(\Q)\longleftrightarrow (Q'_1,Q'_2,Q'_3) \in \mathcal{D}_m(\mathbb{Q})^3.
\]
In order to obtain a rational $D(q)$-quadruple from a triple of points on $\DD(\QQ),$ we must satisfy the previously mentioned conditions: $a,b,c,d$ must be rational, mutually disjoint and nonzero.

It is easy to see that if one of them is rational, then so are the other three (i.e. $b=\frac{t_{12}^2-q}{a}$), and that they will be nonzero when $m\neq 0,$ since $m=abcd.$

The elements of the quadruple $(a,b,c,d)$ corresponding to the triple of points $(Q'_1, Q'_2, Q'_3)$ are distinct, if no two of the points $Q'_1, Q'_2, Q'_3$ can be transformed from one to another via changing signs and/or switching coordinates. For example, the triple $(t_{12},t_{34}),(-t_{34},t_{12}),(t_{14},t_{23})$ would lead to $a=d.$ This condition on points in $\DD$ is easily understood on points in $E_m.$

Assume $P\in E_m \leftrightarrow (x,y)\in \mathcal{D}_m,$ that is, $f(x,y)=P.$ Then 
\begin{equation}\label{ness}
    S-P\leftrightarrow (-x,y),\quad P+R\leftrightarrow (-y,x).
\end{equation}
The maps $P\mapsto S-P$ and $P\mapsto P+R$ generate a group $G$ of translations on $E_m$, isomorphic to $\mathbb{Z}/2\mathbb{Z} \times \mathbb{Z}/4\mathbb{Z},$ and $G$ induces a group action on $E_m(\Qbar).$ In order to obtain a quadruple from the triple $(Q_1,Q_2,Q_3)\in E_m(\Q)^3$, such that the elements of the quadruple are distinct, the orbits $G\cdot Q_1, G\cdot Q_2, G\cdot Q_3$ must be disjoint. This is because the set of points in $\mathcal{D}_m$ corresponding to $G\cdot P$ is $\{(\pm x,\pm y),(\pm y,\pm x)\}.$ We say that such a triple of points satisfies the non-degeneracy criteria.

Let $\Dbar$ denote the projective closure of the curve $\DD$ defined by
\[
\Dbar \colon (X^2-qZ^2)(Y^2-qZ^2)=mZ^4.
\] The map $f^{-1} \colon E_m \to \Dbar$ is a rational map, and since the curve $E_m$ is smooth, the map is a morphism \cite[II.2.1]{silverman2009arithmetic}. The map $x\circ f^{-1}\colon E_m \to \mathbb{A}^1$ given by 
\[
x\circ f^{-1}(P)=\frac{X\circ f^{-1}(P)}{Z\circ f^{-1}(P)}
\] 
has a pole in points $P_0$ such that $f^{-1}(P_0)=[1:0:0],$ and is regular elsewhere. The map $y\circ f^{-1}\colon E_m \to \mathbb{A}^1$ given by 
\[
y\circ f^{-1}(P)=\frac{Y\circ f^{-1}(P)}{Z\circ f^{-1}(P)}
\] 
has a pole in points $P_2$ such that $f^{-1}(P_2)=[0:1:0],$ and is regular elsewhere.
We define the rational map $g\colon E_m \to \mathbb{A}^1$ by
\[
g(P)=(x_1^2-q)\cdot \left(\left(x\circ f^{-1}(P)\right)^2-q\right).
\]
The map $g$ has a pole in the same points as the map $x\circ f^{-1},$ and is regular elsewhere.

The maps $f$ and $g$ depend on a fixed point $P_1\in\mathcal{D}_m.$ We omit noting this dependency and simply denote these maps by $f$ and $g$. The motivation for the map $g$ is \cite[2.4, Proposition 4]{dujella2017more}. Dujella and Kazalicki use the $2$-descent homomorphism in the proof of Proposition 4, we will use $g$ for similar purposes.

\begin{theorem}\label{thm:1}
Let $(x_1,y_1)\in \DD(\QQ)$ be the point used to define the map $f\colon \DD \to E_m.$ If $(Q_1,Q_2,Q_3)\in E_m(\QQ)^3$ is a triple satisfying the non-degeneracy criteria such that $(y_1^2-q)\cdot g(Q_1+Q_2+Q_3)$ is a square, then the numbers
$$a=\pm\left(\frac{1}{m}\frac{g(Q_1)}{(x_1^2-q)}\frac{g(Q_2)}{(x_1^2-q)}\frac{g(Q_3)}{(x_1^2-q)}\right)^{1/2},$$ $$b=\frac{g(Q_1)}{a(x_1^2-q)}, c=\frac{g(Q_2)}{a(x_1^2-q)}, d=\frac{g(Q_3)}{a(x_1^2-q)}$$ are rational and form a \emph{rational $D(q)$-quadruple} such that $abcd=m$.

Conversely, assume $(a,b,c,d)$ is a \emph{rational $D(q)$-quadruple}, such that $m=abcd$. If the triple $(Q_1,Q_2,Q_3)\in E_m(\QQ)^3$ corresponds to $(a,b,c,d)$, then $(y_1^2-q)g(Q_1+Q_2+Q_3)$ is a square.
\end{theorem}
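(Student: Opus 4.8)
The plan is to show that the map $g$ coincides, modulo squares, with the classical two-descent (Kummer) map attached to the rational two-torsion point $(0,0)\in E_m$, and then to exploit that this map is a group homomorphism. Write $T$ for the abscissa function on $E_m$, and for $P\in E_m$ put $(x,y)=f^{-1}(P)\in\DD$, so that $g(P)=(x_1^2-q)(x^2-q)$ by definition. The first and main step is the identity
\[
g(P)\equiv T(P) \pmod{(\QQ^\ast)^2},
\]
which I would prove by exhibiting $g/T$ as the square of a rational function on $\DD$. Denoting by $N=2x_1(y^2-q)x+(x_1^2+q)y^2+x_1^2y_1^2-2x_1^2q-y_1^2q$ the numerator appearing in the formula for $f$, a direct simplification that uses only the defining relation $(x^2-q)(y^2-q)=m$ collapses $(y^2-q)N$ to $(y^2-q)^2(x+x_1)^2$, whence $N=(y^2-q)(x+x_1)^2$ and
\[
\frac{g}{T}=\left(\frac{(x_1^2-q)(y-y_1)}{(y^2-q)(x+x_1)}\right)^2.
\]
I expect this telescoping computation to be the real crux of the argument: everything else is formal once it is in place.

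Next I would record that $\mu\colon P\mapsto T(P)\bmod(\QQ^\ast)^2$, extended by $\mu(\mathcal{O})=\mu((0,0))=1$, is a homomorphism $E_m(\QQ)\to\QQ^\ast/(\QQ^\ast)^2$. This is the standard fact that for three collinear points with $P_1+P_2+P_3=\mathcal{O}$ the product of their $T$-coordinates is a square: substituting the secant line $W=\lambda T+\nu$ into $W^2=T^3+(4q^2-2m)T^2+m^2T$ yields a monic cubic in $T$ with constant term $-\nu^2$, so $T(P_1)T(P_2)T(P_3)=\nu^2$. Combining this with the identity of the first step gives the multiplicativity
\[
g(P_1+P_2+P_3)\equiv g(P_1)\,g(P_2)\,g(P_3)\pmod{(\QQ^\ast)^2},
\]
valid for any triple, the degenerate cases where a $T$-coordinate vanishes or a point equals $\mathcal{O}$ or $(0,0)$ being absorbed by the extension of $\mu$.

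Finally I would substitute the quadruple data. If $(Q_1,Q_2,Q_3)$ corresponds to $(a,b,c,d)$, then $f^{-1}(Q_1)=(t_{12},t_{34})$, and so on, giving $g(Q_1)/(x_1^2-q)=t_{12}^2-q=ab$, and likewise $g(Q_2)/(x_1^2-q)=ac$ and $g(Q_3)/(x_1^2-q)=ad$. Hence
\[
g(Q_1)g(Q_2)g(Q_3)=(x_1^2-q)^3\,a^3bcd=(x_1^2-q)^3a^2m\equiv(x_1^2-q)\,m\pmod{(\QQ^\ast)^2},
\]
and since $m=(x_1^2-q)(y_1^2-q)$ this is $\equiv(y_1^2-q)$. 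By the multiplicativity of the previous step, $g(Q_1+Q_2+Q_3)\equiv(y_1^2-q)$, so $(y_1^2-q)\,g(Q_1+Q_2+Q_3)\equiv(y_1^2-q)^2$ is a square, which is the asserted converse. The forward direction runs the same computation in reverse: the hypothesis that $(y_1^2-q)\,g(Q_1+Q_2+Q_3)$ is a square forces $\frac{1}{m}\prod_{i}\frac{g(Q_i)}{x_1^2-q}$ to be a square, so $a$ is rational; then $ab+q=t_{12}^2$ and $cd+q=t_{34}^2$ (read off from $f^{-1}(Q_i)\in\DD(\QQ)$), together with $abcd=m$, show that $\{a,b,c,d\}$ is a \emph{rational $D(q)$-quadruple}, distinctness and nonvanishing being exactly the non-degeneracy criteria and $m\neq0$.
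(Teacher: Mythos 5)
Your proof is correct, and it takes a genuinely different route from the paper's. The crux identity you guessed is right: expanding gives $N-(y^2-q)(x+x_1)^2=(x_1^2-q)(y_1^2-q)-(x^2-q)(y^2-q)$, which vanishes identically on $\DD$, so indeed
\[
T\circ f=(y_1^2-q)(y^2-q)\left(\frac{x+x_1}{y-y_1}\right)^2
\qquad\text{and}\qquad
\frac{g}{T}=\left(\frac{(x_1^2-q)(y-y_1)}{(y^2-q)(x+x_1)}\right)^2,
\]
an honest square of a function in $\QQ(E_m)$. Your conventions at the exceptional points are also consistent: $g(\mathcal{O})=(x_1^2-q)^2\equiv 1$, and since $2R=(0,0)$ with $f^{-1}(2R)=(-x_1,-y_1)$, also $g((0,0))=(x_1^2-q)^2\equiv m^2$, which is the standard value of the descent map at $(0,0)$ for $W^2=T^3+(4q^2-2m)T^2+m^2T$. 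So $g$ agrees modulo $(\QQ^\ast)^2$ with the two-descent map attached to the rational two-torsion point $(0,0)$, and the multiplicativity $g(P+Q)\equiv g(P)g(Q)$ --- which is the paper's Theorem \ref{thm:6} and the only nontrivial input to Theorem \ref{thm:1} --- follows from the classical homomorphism property of that map; your final substitution of the quadruple data then matches the paper's. The paper proves the same multiplicativity quite differently: Proposition \ref{prop:4} constructs $h\in\QQ(E_m)$ with $g\circ[2]=h^2$ via a divisor computation and Galois descent, and Theorem \ref{thm:6} then runs a cocycle-style argument following Silverman III.8. Notably, the authors remark after Theorem \ref{thm:6} that, unlike in the earlier Dujella--Kazalicki work, their $g$ admits no simple factorization mod squares that would permit the two-descent shortcut; your identity shows that it does, so your argument is a genuine simplification, at the price of an explicit computation that the paper's divisor-theoretic method avoids (and which, conversely, is more robust when no such identity is in sight). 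Two points of care, neither fatal: in the forward direction you should state that all six conditions are met, i.e.\ $ac+q$, $ad+q$, $bc+q$, $bd+q$ are squares as well, read off from $Q_2$, $Q_3$ and from the $y$-coordinates of $f^{-1}(Q_i)$ exactly as $ab+q$, $cd+q$ are from $Q_1$ (the paper is equally terse here); and, like the paper, you tacitly assume that none of $Q_1,Q_2,Q_3,Q_1+Q_2+Q_3$ is a zero or pole of $g$ (the points $S_1,S_2,R_1,R_2$ of Proposition \ref{prop:3}, which are rational only when $q$ is a square), a boundary case the paper's own proof does not treat either.
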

   
It is not true that the existence of a rational point on $\mathcal{D}_m(\QQ)$ implies the existence of a rational $D(q)$-quadruple with product $m.$ Examples with further clarification are given in Section \ref{sec:4}. The following classification theorem holds:
\begin{theorem}\label{thm:2}
There exists a rational $D(q)$-quadruple with product $m$ if and only if 
\[m=(t^2-q)\left(\frac{u^2-q}{2u}\right)^2
\] for some rational parameters $(t,u).$
\end{theorem}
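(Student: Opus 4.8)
The plan is to use Theorem~\ref{thm:1} as the bridge between the existence of a $D(q)$-quadruple with product $m$ and the arithmetic of the elliptic curve $E_m$. Concretely, Theorem~\ref{thm:1} tells me that a nondegenerate $D(q)$-quadruple with product $m$ exists precisely when I can find a triple $(Q_1,Q_2,Q_3)\in E_m(\QQ)^3$ satisfying the nondegeneracy criteria such that $(y_1^2-q)\cdot g(Q_1+Q_2+Q_3)$ is a square. So the classification I am after amounts to deciding, for which $m$, such a triple can be produced. The natural way to get my hands on the quantity $(y_1^2-q)\,g(R)$ for a single rational point $R=Q_1+Q_2+Q_3$ is to compute $g$ explicitly in terms of the coordinates of points on $E_m$, and to parametrize the values $(x\circ f^{-1}(R))$ — equivalently the $X$-coordinate on $\Dbar$ — that actually occur.

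First I would analyze the map $g$ directly. By definition $g(P)=(x_1^2-q)\bigl((x\circ f^{-1}(P))^2-q\bigr)$, and $x\circ f^{-1}(P)$ is just the $X/Z$-coordinate on $\Dbar$ of the preimage of $P$. Writing $t$ for this $X$-coordinate, the quantity $(y_1^2-q)\,g(P)$ becomes $(y_1^2-q)(x_1^2-q)(t^2-q)$. Since $(x_1,y_1)\in\DD(\QQ)$ we have the defining relation $(x_1^2-q)(y_1^2-q)=m$, so $(y_1^2-q)\,g(P)=m(t^2-q)$ up to the overall factor $(x_1^2-q)$ — here I would keep careful track of whether the extra factor of $(x_1^2-q)$ is itself a square times something harmless, or whether it is absorbed by the freedom in choosing the sign and by the square condition. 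The upshot I expect is that the square condition $(y_1^2-q)\,g(Q_1+Q_2+Q_3)=\square$ is equivalent to $(t^2-q)$ being a specified rational times a square, where $t$ ranges over the $X$-coordinates on $\Dbar$ that arise from rational points; and since $\DD(\QQ)\neq\emptyset$ I can take $t$ to be essentially a free rational parameter. That produces the factor $(t^2-q)$ in the claimed formula.

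Next I would produce the factor $\left(\frac{u^2-q}{2u}\right)^2$ by exhibiting a concrete nondegenerate triple. The special points already identified in the excerpt are the key resource: $\mathcal{O}$, the point $R=(m,2mq)$ of order four, and the infinite-order point $S$ whose coordinates are written out in terms of $(x_1,y_1)$. I would take a triple built from translates by the group $G$ together with a free rational point, arrange $Q_1+Q_2+Q_3$ to be a point whose $X$-coordinate on $\Dbar$ I can control, and compute $g$ on it. The torsion structure of $E_m$ (the explicit $2$- and $4$-torsion coming from $W^2=T(T-m)^2+\dots$, i.e. the $T$-coordinates $0$ and $m$) gives the even part, and the value $u^2-q$ over $2u$ should emerge as the half-argument formula when I express the duplication/group law on $E_m$ in the original $(X,Y)$-coordinates via $f^{-1}$; the factor $2u$ in the denominator is the tell-tale sign of a division-by-two (a point being twice another) on $\DD$. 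For the converse direction I would run the same computation backward: given any $m$ of the stated shape, the parameters $(t,u)$ directly build a point on $\DD$ and hence, through $f$ and the group $G$, a triple satisfying the hypotheses of Theorem~\ref{thm:1}, which then outputs a genuine $D(q)$-quadruple with product $m$.

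The main obstacle I anticipate is twofold. First, the bookkeeping of squares: I must verify that the factor $(x_1^2-q)$ and the various sign choices do not change the squareclass of the relevant quantities, so that the clean statement $m=(t^2-q)\left(\frac{u^2-q}{2u}\right)^2$ comes out with no extraneous square factors obstructing rationality. Second, and more seriously, the nondegeneracy criteria: I must ensure that the triple I construct has the three $G$-orbits $G\cdot Q_1,G\cdot Q_2,G\cdot Q_3$ pairwise disjoint, i.e. that I am not accidentally producing $a=\pm b$ or coincidences among $a,b,c,d$. Handling the finitely many degenerate $(t,u)$ — and checking that they can always be perturbed within the same value of $m$, using that $\DD$ has infinitely many rational points once it has one — is where the real care is needed, and I expect that is the step requiring the most delicate argument rather than routine algebra.
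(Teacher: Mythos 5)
Your top-level plan (use Theorem~\ref{thm:1} to reduce everything to the square condition on $(y_1^2-q)\,g(Q_1+Q_2+Q_3)$, then argue each direction) is the same as the paper's, but your forward direction has a genuine gap. From $(y_1^2-q)\,g(Q)=m\bigl((x\circ f^{-1}(Q))^2-q\bigr)$ you conclude that the square condition amounts to $m\equiv (t^2-q) \pmod{(\QQ^*)^2}$, and you then declare that $t$ may be treated as ``essentially a free rational parameter.'' That relaxation loses exactly the information the theorem needs: the condition ``$m=(t^2-q)s^2$ for some rationals $t,s$'' is \emph{strictly weaker} than ``$m=(t^2-q)\left(\frac{u^2-q}{2u}\right)^2$,'' since the latter forces the square factor to satisfy $s^2+q=\square$ as well. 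The paper's own example exhibits the difference: for $q=3$ one has $1012=(16^2-3)\cdot 2^2$, so $1012$ is of your relaxed form, yet Section~\ref{sec:4} shows there is no $D(3)$-quadruple with product $1012$ (and $2^2+3=7$ is not a square). So an argument whose output is only ``$m$ is $(t^2-q)$ times a square'' cannot prove the theorem.

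The missing idea is precisely Lemma~\ref{lemma:7} of the paper. One must \emph{keep} the constraint that $t=x\circ f^{-1}(Q)$ is the $X$-coordinate of a rational point of $\DD$, and use the curve relation $(x^2-q)(y^2-q)=m$ to rewrite $(y_1^2-q)\,g(Q)=m^2/\bigl((y\circ f^{-1}(Q))^2-q\bigr)$; the square condition then says that $(y\circ f^{-1}(Q))^2-q$ is itself a rational square, and translating by the $4$-torsion point $R$, which swaps coordinates on $\DD$ by (\ref{ness}), produces a rational point $(x_0,y_0)\in\DD(\QQ)$ with $x_0^2-q$ a square. The factor $\left(\frac{u^2-q}{2u}\right)^2$ then comes from the elementary conic parametrization $q=(x_0-k)(x_0+k)$, $u=x_0-k$, $x_0=\frac{q+u^2}{2u}$ --- not, as you propose, from duplication or ``half-argument'' formulas on $E_m$; that mechanism has nothing to do with this factor and would not produce it. Your backward direction is right in outline, but the substance is in the choices you leave unspecified: the paper takes the base point $(x_1,y_1)=\bigl(\frac{q+u^2}{2u},t\bigr)$ and the triple $(R+S,2S,3S)$, so that $Q_1+Q_2+Q_3\equiv R \pmod{2E_m(\QQ)}$ and $g(R)(y_1^2-q)=(x_1^2-q)(y_1^2-q)^2$ is a square exactly because $x_1^2-q=\left(\frac{u^2-q}{2u}\right)^2$ was arranged to be a square. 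Finally, you misplace the difficulty: the non-degeneracy of that triple is the easy part (the disjointness of the $G$-orbits is a short check), whereas the delicate work is the square bookkeeping you hoped would be routine, which is exactly what Theorem~\ref{thm:6} and Lemma~\ref{lemma:7} exist to handle.
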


In Section \ref{sec:2} we study properties of the function $g$ which we then use in Section \ref{sec:3} to prove Theorems \ref{thm:1} and \ref{thm:2}. In Section \ref{sec:4}, we give an algorithm on how to determine whether a specific $m$ such that $\mathcal{D}_m(\QQ)\neq \emptyset$ admits a rational $D(q)$-quadruple with product $m.$ We conclude the section with an example of an infinite family.

\section{Properties of the function $g$}\label{sec:2}

In this section, we investigate the properties of the function $g$ which we will use to prove the main theorems. The following proposition describes the divisor of $g.$

\begin{proposition}\label{prop:3} The divisor of $g$ is $$\div g=2(S_1)+2(S_2)-2(R_1)-2(R_2),$$ 
where
$S_1,R_1,S_2,R_2\in E_m(\QQ(\sqrt{q}))$ with coordinates
\begin{align*}
S_1&=(~(y_1^2-q)(x_1-\sqrt{q})^2, \hspace{5.8pt}\quad 2y_1\sqrt{q}~(y_1^2-q)(x_1-\sqrt{q})^2~),\\
R_1&=(~(x_1^2-q)(y_1+\sqrt{q})^2, \hspace{5.8pt}\quad 2x_1\sqrt{q}~(x_1^2-q)(y_1+\sqrt{q})^2~),\\
S_2&=(~(y_1^2-q)(x_1+\sqrt{q})^2,~ -2y_1\sqrt{q}~(y_1^2-q)(x_1+\sqrt{q})^2~),\\
R_2&=(~(x_1^2-q)(y_1+\sqrt{q})^2,~ -2x_1\sqrt{q}~(x_1^2-q)(y_1-\sqrt{q})^2~).
\end{align*}

The points $S_1, S_2, R_1$ and $R_2$ satisfy the following identities:
\begin{align*}
2S_1&=2S_2=f(x_1,-y_1)=S+2R,\\
2R_1&=2R_2=f(-x_1,y_1)=S,\\
S_1+R&=R_1,\quad R_1+R=S_2,\quad S_2+R=R_2,\quad R_2+R=S_1.\end{align*}
\end{proposition}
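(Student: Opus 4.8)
The plan is to obtain $\div g$ by pulling back the divisor of $x^{2}-q$ through the morphism $f^{-1}\colon E_m\to\Dbar$. Since $g$ is the $f^{-1}$-pullback of the function $(x_1^{2}-q)(x^{2}-q)$ on $\Dbar$ and the factor $(x_1^{2}-q)$ is a nonzero constant, it suffices to locate the zeros and poles of $x^{2}-q$ on the smooth model $E_m$ of $\Dbar$. On the affine curve $\DD$ the defining relation $(x^{2}-q)(y^{2}-q)=m$ with $m\neq0$ rules out both $x^{2}=q$ and $x=\infty$, so every zero and pole of $x^{2}-q$ lies over a point at infinity; and setting $Z=0$ in $\Dbar$ forces $X^{2}Y^{2}=0$, leaving only $[1:0:0]$ and $[0:1:0]$.

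First I would carry out the local analysis at these two points, which I expect to be the main obstacle. In the chart $Y=1$ the equation near $[0:1:0]$ reads $(X^{2}-qZ^{2})(1-qZ^{2})=mZ^{4}$, so $X^{2}=qZ^{2}+O(Z^{4})$ and $[0:1:0]$ is a node with two branches along which $x=X/Z\to\pm\sqrt q$ while $y=1/Z\to\infty$; taking $Z$ as a local parameter, $x^{2}-q=mZ^{2}/(1-qZ^{2})$ has a double zero on each branch. The chart $X=1$ shows symmetrically that $[1:0:0]$ is a node whose two branches have $x\to\infty$, $y\to\pm\sqrt q$, where $x^{2}-q=(1-qZ^{2})/Z^{2}$ acquires a double pole. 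Naming the two points of $E_m$ over $[0:1:0]$ as $S_1,S_2$ and those over $[1:0:0]$ as $R_1,R_2$ gives $\div g=2(S_1)+2(S_2)-2(R_1)-2(R_2)$; this has degree $0$, and there are no further contributions because $f^{-1}$ is a morphism and $x^{2}-q$ is finite and nonzero everywhere else.

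Next I would read off the coordinates by evaluating $f$ as a limit along each branch, the most laborious step. On the $[0:1:0]$-branch with $x\to+\sqrt q$, substituting the branch expansion into the $T$-formula and sending $y\to\infty$ makes the $y^{2}$-terms dominate, their coefficient collapsing to $(x_1+\sqrt q)^{2}$, so $T\to(y_1^{2}-q)(x_1+\sqrt q)^{2}$, while the $W$-formula gives $W\to-2y_1\sqrt q\,(y_1^{2}-q)(x_1+\sqrt q)^{2}$; replacing $\sqrt q$ by $-\sqrt q$ handles the other branch. On each $[1:0:0]$-branch the indeterminate term $(y^{2}-q)x=O(1/x)$ tends to $0$, the surviving numerator factors as $(x_1^{2}-q)(y_1^{2}-q)$, and cancelling one factor $(y_1^{2}-q)$ against $(y-y_1)^{2}\to(y_1\mp\sqrt q)^{2}$ leaves $T\to(x_1^{2}-q)(y_1\pm\sqrt q)^{2}$, with the listed $W$. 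These four limits are exactly the points displayed, under the labelling in which $S_1$ and $R_1$ carry $(x_1-\sqrt q)^{2}$ and $(y_1+\sqrt q)^{2}$.

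Finally, I would deduce the group-law identities from the way the two generators of $G$ permute these four branches, with no further explicit addition needed. The involution $\sigma\colon(x,y)\mapsto(-x,y)$, acting as $P\mapsto S-P$, fixes each $[1:0:0]$-branch (as $-x\to\infty$ too) and interchanges the two $[0:1:0]$-branches; under $f$ this becomes $S-R_1=R_1$ and $S-R_2=R_2$, i.e. $2R_1=2R_2=S=f(-x_1,y_1)$, together with $S_1+S_2=S$. The map $\tau\colon(x,y)\mapsto(-y,x)$, acting as $P\mapsto P+R$, carries the four branches around a single $4$-cycle, which is the chain $S_1+R=R_1$, $R_1+R=S_2$, $S_2+R=R_2$, $R_2+R=S_1$; reading off $S_1-S_2=\pm2R$ and combining with $S_1+S_2=S$ and $2R=-2R$ gives $2S_1=2S_2=S+2R=f(x_1,-y_1)$. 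The only delicate point is the bookkeeping of the branch labels and of the direction of the $4$-cycle, which the explicit coordinates pin down; alternatively every identity can be checked directly from the group law on $E_m$.
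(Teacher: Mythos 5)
Your proof is correct in substance but takes a genuinely different route from the paper's. The paper never analyzes the singularities of $\Dbar$ directly: it determines that $x\circ f^{-1}$ has degree two from its zeros $f(0,\pm K)$, $K^2=(q^2-m)/q$, shows that the points of $E_m$ lying over the non-affine locus are permuted by $P\mapsto P+R$ and $P\mapsto S-P$ (both preserve affineness), deduces that every pole of $x\circ f^{-1}$ or $y\circ f^{-1}$ is a non-affine fixed point of an involution $P\mapsto S-P+kR$, and then obtains the coordinates of $S_1,S_2,R_1,R_2$ and the addition identities by Magma. You instead work locally at the two points at infinity of $\Dbar$, observe they are nodes whose branches are separated by the normalization $f^{-1}$, read the multiplicities off the branch expansions (double zeros of $x^2-q$ over $[0:1:0]$, double poles over $[1:0:0]$), obtain the coordinates as limits of $f$ along the branches, and derive the group identities from how $(x,y)\mapsto(-x,y)$ and $(x,y)\mapsto(-y,x)$ permute the branches. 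This buys a self-contained, computer-free proof, and your limits do check out: the $y^2$-coefficient collapses to $2x_1(\pm\sqrt q)+x_1^2+q=(x_1\pm\sqrt q)^2$, and the surviving constant $(x_1^2+q)q+x_1^2y_1^2-2x_1^2q-y_1^2q$ factors as $(x_1^2-q)(y_1^2-q)$.

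There is, however, one bookkeeping error, at exactly the spot you flagged as delicate: the direction of the four-cycle. With the labels your limits force ($S_1$: branch $x\to-\sqrt q$; $S_2$: branch $x\to+\sqrt q$; $R_1$: branch $y\to+\sqrt q$; $R_2$: branch $y\to-\sqrt q$), evaluating the identities $x\circ f^{-1}(P+R)=-y\circ f^{-1}(P)$ and $y\circ f^{-1}(P+R)=x\circ f^{-1}(P)$ at $P=S_1$ shows that $S_1+R$ has an $x$-pole and $y$-value equal to $x\circ f^{-1}(S_1)=-\sqrt q$, so $S_1+R=R_2$, not $R_1$; the cycle runs $S_1\to R_2\to S_2\to R_1\to S_1$, the reverse of the chain you assert. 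A direct group-law computation with $q=3$, $x_1=5$, $y_1=7$ confirms this: $S_1+R$ has $T$-coordinate $22\,(7-\sqrt 3)^2=(x_1^2-q)(y_1-\sqrt q)^2$, not $(x_1^2-q)(y_1+\sqrt q)^2$. To be fair, this clash is inherited from the statement itself: the printed $R_2$ is not a point of $E_m$ (its $T$- and $W$-coordinates are incompatible), and once its $T$-coordinate is corrected to $(x_1^2-q)(y_1-\sqrt q)^2$ — which is what your limit yields — the printed four-term chain holds only after interchanging $R_1$ and $R_2$. None of this affects the divisor formula, the doubling identities $2S_i=S+2R$ and $2R_i=S$, or the later use in Proposition \ref{prop:4}, which only needs $S_1+S_2-R_1-R_2=\pm 2R$ (killed by $[2]$ either way); but you should either relabel the branches or state the corrected chain rather than the printed one.
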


\begin{proof} We seek zeros and poles of $g.$ The poles of $g$ are the same as the poles of $x\circ f^{-1}.$ To find zeros of $g$, notice that 
\[
(x\circ f^{-1}(P))^2-q=\frac{m}{(y\circ f^{-1}(P))^2-q},
\] so all we need to find are poles of $y \circ f^{-1}.$
 
The zeros of $x\circ f^{-1}$ are points on $E_m$ which map to affine points on $\Dbar$ that have zero $x$-coordinate. We can easily calculate such points. If $x=0,$ then $y^2=\frac{q^2-m}{q}.$ Denote $K=\sqrt{\frac{q^2-m}{q}}.$ We know $K\neq 0,$ since $m\neq q^2.$

The zeros of $x\circ f^{-1}$ are the points $f(0,K), f(0,-K)\in E_m(\Qbar),$ which are different since $K\neq 0.$ Since $x\circ f^{-1}$ is of degree two, both zeros are of order one. We conclude $x\circ f^{-1}$ has either one double pole, or two poles of order one.

Similarly, the zeros of $y\circ f^{-1}$ are the points $f(K,0), f(-K,0)\in E_m(\Qbar),$ both of order one. The map $y\circ f^{-1}$ also has either a double pole or two poles of order one.

Assume the point $P_0 \in E_m$ maps to a non-affine point in $\Dbar.$ This means that $Z\circ f^{-1}(P_0)=0,$ and at least one of the projective coordinate functions $X\circ f^{-1}, Y\circ f^{-1}$ is nonzero at $P_0.$ It follows that $P_0$ is a pole of at least one of the maps $x\circ f^{-1}, y\circ f^{-1}.$

Let $P_0 \in E_m$ be a pole of one of the maps $x\circ f^{-1}, y\circ f^{-1}.$ None of the points $ f^{-1}(P_0),$  $f^{-1}(P_0+R),  f^{-1}(P_0+2R),  f^{-1}(P_0+3R)$ are affine points on $\Dbar$ because if one of them is an affine point, then they all are, since the map $P\mapsto P+R$ viewed on $\Dbar$ maps affine points to affine points. We conclude that each of the points $P_0, P_0+R, P_0+2R, P_0+3R$ is a pole of one of the maps $x\circ f^{-1}, y\circ f^{-1}$ and with the previous claims we have that $x\circ f^{-1}, y\circ f^{-1}$ both have two poles of order one.

The map $P\mapsto S-P,$ viewed on $\Dbar$, also maps affine points to affine points. Similarly as above, the points $f^{-1}(S-P_0), f^{-1}(S-P_0+R), f^{-1}(S-P_0+2R), f^{-1}(S-P_0+3R)$ are not affine in $\Dbar,$ because the point $f^{-1}(P_0)$ would be affine as well. The sets $\lbrace P_0, P_0+R, P_0+2R, P_0+3R \rbrace$ and $\lbrace S-P_0, S-P_0+R, S-P_0+2R, S-P_0+3R \rbrace$ must be equal, otherwise the maps $x\circ f^{-1}, y\circ f^{-1}$ would have more than four different poles in total. This means that every pole satisfies the equality $2P_0=S+kR$ for some $k\in \lbrace 0, 1, 2, 3\rbrace.$ Equivalently, every pole $P_0$ is a fixed point of some involution $i_k$ of the form $P\mapsto S-P+kR.$ Each involution $i_k$ has four fixed points on $E_m(\Qbar),$ because any two fixed points differ by an element from the $[2]$-torsion.

The involution $i_0,$ viewed on $\Dbar,$ maps an affine point $(x,y)=f^{-1}(P)$ to $(-x,y)=f^{-1}(S-P).$ It has two affine fixed points which have $x$-coordinate equal to zero on $\Dbar$, as well as two fixed points which are not affine on $\Dbar.$ Such points are either poles of $x\circ f^{-1}$ or poles of $y \circ f^{-1}.$ Using Magma\cite{bosma1997magma} we calculate the coordinates explicitly to obtain $R_1$ and $R_2.$ Computationally, we confirm $R_1$ and $R_2$ are poles of $x \circ f^{-1},$ that is, poles of $g.$

The involution $i_2,$ viewed on $\Dbar,$ maps an affine point $(x,y)=f^{-1}(P)$ to $(x,-y)=f^{-1}(S-P+2R).$ It has two affine fixed points which have $y$-coordinate equal to zero on $\Dbar,$ as well as two fixed points which are not affine on $\Dbar.$ These points must be poles of the map $y\circ f^{-1},$ that is, zeros of $g.$ Again, using Magma, we calculate the coordinates to obtain $S_1$ and $S_2.$

Since the poles of $x\circ f^{-1}$ are of order one, then the poles of $g$ are of order two. The same is true for poles of $y\circ f^{-1},$ that is, for zeros of $g.$ The last row of identities in the statement of the theorem is checked by Magma.
\end{proof}

\proposition \label{prop:4} There exists $h\in \QQ(E_m)$ such that $g\circ[2]=h^2.$
\proof Let $\tilde{h}\in \Qbar(E_m)$ such that 
\begin{align*}
\div \tilde{h}&=[2]^{\ast}((S_1)+(S_2)-(R_1)-(R_2))\\
&=\sum\limits_{T\in E_m[2]} (S'_1+T)+\sum\limits_{T\in E_m[2]} (S'_2+T)-\sum\limits_{T\in E_m[2]} (R'_1+T)-\sum\limits_{T\in E_m[2]} (R'_2+T),
\end{align*}
where $2S'_i=S_i,2R'_i=R_i$ and $[2]^{\ast}$ is the pullback of the doubling map on $E_m.$

Such $\tilde{h}$ exists because of Corollary 3.5 in Silverman\cite[III.3]{silverman2009arithmetic} stating that if $E$ is an elliptic curve and $D=\sum n_P(P)\in \text{Div}(E),$ then $D$ is principal if and only if
$$\sum_{P\in E} n_P=0 \text{ \hspace{5pt}and } \sum_{P\in E} [n_P]P=0,$$
where the second sum is addition on $E.$

The first sum being equal to zero is immediate, and for the second one we have
\[
\sum\limits_{T\in E_m[2]} (S'_1+T)+\sum\limits_{T\in E_m[2]} (S'_2+T)-\sum\limits_{T\in E_m[2]} (R'_1+T)-\sum\limits_{T\in E_m[2]} (R'_2+T)=
\]
\[
=[4](S'_1+S'_2-R'_1-R'_2)=[2](S_1+S_2-R_1-R_2)=
\]
\[
=[2](S_1-R_2+S_2-R_1)\stackrel{(\ast)}{=}[2](R+R)=\mathcal{O},
\]
where $(\ast)$ follows from the last row of identities in Proposition \ref{prop:3}.

Easy calculations give us $\div g\circ [2]=\div \tilde{h}^2$ which implies $C\tilde{h}^2=g\circ [2],$ for some $C\in \Qbar.$ Let $h:=\tilde{h}\sqrt{C}\in \Qbar(E_m)$ so that $h^2=g\circ [2].$ We will prove $h\in \QQ(E_m).$

First, we show that every $\sigma\in \Gal$ permutes zeros and poles of $\tilde{h}$. Let us check what $\sigma$ does to $S_1$ and $S_2.$ Since $S_1$ and $S_2$ are conjugates over $\QQ(\sqrt{q})$, the only possibilities for $S_1^\sigma$ are $S_1$ or $S_2.$ If $S_1^\sigma=S_1,$ then we must have $(S'_1)^\sigma=S'_1+T,$ where $T\in E_m[2],$ because $2((S'_1)^\sigma-S'_1)=(2S'_1)^\sigma-2S'_1=S_1^\sigma-S_1=\mathcal{O}.$ Thus $\sigma$ fixes $\sum\limits_{T\in E_m[2]} (S'_1+T)$. Since in this case we also know that $S_2^\sigma=S_2,$ we get that $\sigma$ fixes $\sum\limits_{T\in E_m[2]} (S'_2+T)$ as well.
 
 If  $S_1^\sigma=S_2$ it is easy to see that $$\left(\sum\limits_{T\in E_m[2]} (S'_1+T)\right)^\sigma=\sum\limits_{T\in E_m[2]} (S'_2+T) \text{ and } \left(\sum\limits_{T\in E_m[2]} (S'_2+T)\right)^\sigma=\sum\limits_{T\in E_m[2]} (S'_1+T).$$

Similar statements hold for $R_1$ and $R_2,$ so we conclude that $\tilde{h}$ is defined over $\QQ.$ Both $h$ and $\tilde{h}$ have the same divisor so $h$ is also defined over $\QQ$. Now we use the second statement from Theorem 7.8.3. in \cite{galbraith2012mathematics}:
\begin{theorem}
Let $C$ be a curve over a perfect field $k$ and let $f\in \overline{k}(C).$ 
\begin{enumerate}
\item If $\sigma(f)=f, ~~$ for each $\sigma \in \textrm{Gal}(\overline{k}/k)$ then $f\in k(C).$ 
\item If $\div(f)$ is defined over $k$ then $f=ch$ for some $c\in \overline{k}$ and $h\in k(C).$
\end{enumerate}
\end{theorem}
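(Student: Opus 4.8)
The plan is to establish both statements by Galois descent, systematically replacing the infinite extension $\overline{k}/k$ by finite Galois subextensions. This reduction is available because $k$ is perfect, so $\overline{k}/k$ is Galois, and any $f\in\overline{k}(C)$ has only finitely many coefficients, hence lies in $L(C)$ for some finite Galois extension $L/k$. I take $C$ to be a geometrically integral smooth projective curve (the standing setting, and the case of interest $C=E_m$): this ensures both that $\overline{k}$ and $k(C)$ are linearly disjoint over $k$, so that $L\otimes_k k(C)=L(C)$ is a field with a $k$-basis of $L$ as a $k(C)$-basis, and that a rational function on $C_{\overline{k}}$ with empty divisor is a nonzero constant.

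For part (1), I would fix a finite Galois $L/k$ with $G=\textrm{Gal}(L/k)$ and $f\in L(C)$; the hypothesis in particular makes $f$ fixed by $G$. The group $G$ acts $k(C)$-linearly on $L(C)=L\otimes_k k(C)$, trivially on the factor $k(C)$ and through the Galois action on $L$. Since tensoring with the field $k(C)$ is exact, it commutes with the formation of $G$-invariants, so the fixed subspace is $L^G\otimes_k k(C)=k\otimes_k k(C)=k(C)$ by the fundamental theorem of Galois theory. Therefore $f\in k(C)$.

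For part (2), the hypothesis that $\div(f)$ is defined over $k$ means $\sigma(\div f)=\div(\sigma f)=\div f$ for all $\sigma\in\textrm{Gal}(\overline{k}/k)$. Hence $\sigma(f)/f$ has trivial divisor and is a nonzero constant $c_\sigma\in\overline{k}^{\,*}$. Choosing $L$ as above with $f\in L(C)$, each $c_\sigma=\sigma(f)/f$ lies in $L(C)\cap\overline{k}=L^{*}$ and depends only on $\sigma|_L$, so $\sigma\mapsto c_\sigma$ factors through a map $G\to L^{*}$. From $(\sigma\tau)(f)=\sigma(c_\tau)\,\sigma(f)=\sigma(c_\tau)\,c_\sigma f$ one reads off the cocycle relation $c_{\sigma\tau}=c_\sigma\,\sigma(c_\tau)$, so $(c_\sigma)$ represents a class in $H^1(G,L^{*})$. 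By Hilbert's Theorem 90 this group is trivial, so $c_\sigma=\sigma(c)/c$ for some $c\in L^{*}$. Setting $h=f/c$ gives $\sigma(h)=\sigma(f)/\sigma(c)=c_\sigma f/\sigma(c)=f/c=h$ for every $\sigma\in G$, so $h$ is $G$-fixed; part (1) then yields $h\in k(C)$, and $f=ch$ is the required factorization.

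The step I expect to be the main obstacle is the transition between the profinite group $\textrm{Gal}(\overline{k}/k)$ and a finite Galois quotient, since the natural home of the cohomological argument in part (2) is $H^1(\textrm{Gal}(\overline{k}/k),\overline{k}^{\,*})$, which carries continuity constraints. The decisive observation is that $f$ and all the constants $c_\sigma$ already live in a single finite layer $L$, which lets me work entirely with the finite group $H^1(G,L^{*})$ and invoke the classical (finite) Hilbert's Theorem 90, bypassing continuity bookkeeping. The remaining inputs---linear disjointness and the exactness of base change for $G$-invariants in part (1), and the ``empty divisor implies constant'' principle in part (2)---are exactly where the geometric hypotheses on $C$ (geometric integrality, smoothness, completeness) are used.
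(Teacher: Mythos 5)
The paper does not prove this statement at all: it is quoted verbatim from Theorem 7.8.3 of Galbraith \cite{galbraith2012mathematics} and used as a black box in the proof of Proposition \ref{prop:4}, so there is no in-paper argument to compare against. Your proof is correct, and it is essentially the standard Galois-descent argument (close to the one in Galbraith's book). The reduction to a finite layer is legitimate: since $k$ is perfect, $\overline{k}/k$ is Galois, $f$ involves finitely many constants and hence lies in $L(C)$ for a finite Galois $L/k$, and the hypothesis on the full group $\textrm{Gal}(\overline{k}/k)$ descends to $G=\textrm{Gal}(L/k)$ because restriction to $L$ is surjective. For part (1), the identification $L(C)\cong L\otimes_k k(C)$ does require $k$ to be algebraically closed in $k(C)$, i.e.\ geometric integrity of $C$, which you correctly flag (and which is a standing hypothesis in Galbraith's conventions, so the statement is not weakened); given that, invariants commute with the flat base change $-\otimes_k k(C)$ since $(\cdot)^G$ is the kernel of $x\mapsto(\sigma x-x)_{\sigma\in G}$, and $L^G\otimes_k k(C)=k(C)$ follows. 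For part (2), the chain $\div(\sigma f)=\sigma(\div f)=\div f$, the conclusion that $c_\sigma=\sigma(f)/f$ is a constant (this is where completeness and geometric integrality of $C$ enter), the verification $c_{\sigma\tau}=c_\sigma\,\sigma(c_\tau)$, the observation that $c_\sigma\in L^{*}$ depends only on $\sigma|_L$, and the application of finite Hilbert 90 to trivialize the class are all sound, and $\sigma(h)=c_\sigma f/\sigma(c)=f/c=h$ closes the loop via part (1). Your framing of the main obstacle is also apt: the natural cohomological home would be continuous $H^1(\textrm{Gal}(\overline{k}/k),\overline{k}^{*})$, and confining everything to one finite layer is exactly how the continuity bookkeeping is avoided (one could equivalently cite profinite Hilbert 90, since the cocycle you construct is continuous by your finiteness observation, but the finite version suffices). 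In short: the proposal is a complete and correct proof of a result the paper only cites.
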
 
From the second statement of the previous theorem we conclude that $h=c\cdot h'$ where $c\in \Qbar$ and $h'\in \QQ(E_m).$ We know that $c^2(h')^2=h^2=g\circ [2]$, and that $g\circ [2] (\mathcal{O})=(x_1^2-q)^2$ is a rational square. It follows that $\displaystyle c^2=\frac{(x_1^2-q)^2}{h'(\mathcal{O})^2}$ is a rational square as well, hence $c$ is rational. Finally, we have $h\in \QQ(E_m).$
\endproof
We end this section with a theorem which will handle rationality issues in Theorem \ref{thm:1}.
\begin{theorem} \label{thm:6} For all $P,Q\in E_m(\QQ)$ we have $g(P+Q)\equiv g(P)g(Q) \mod (\QQ^*)^2.$\\
In particular, if $P\equiv Q \mod 2E_m(\QQ) $ then $g(P)\equiv g(Q) \mod (\QQ^*)^2.$
\end{theorem}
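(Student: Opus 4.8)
The plan is to reinterpret the claim as the assertion that the map $\phi\colon E_m(\QQ)\to\QQ^*/(\QQ^*)^2$, $\phi(P)=g(P)\bmod(\QQ^*)^2$, is a group homomorphism, and to prove this by comparing $g$ with the classical $2$-descent function attached to a rational $2$-torsion point. First I would observe that the support of $\div g=2(S_1)+2(S_2)-2(R_1)-2(R_2)$ (Proposition \ref{prop:3}) consists of points defined over $\QQ(\sqrt q)$; when $q$ is not a perfect square these are irrational, so $g$ takes a finite nonzero rational value at every $P\in E_m(\QQ)$ and $\phi$ is well defined. In particular $\phi(\mathcal O)=g(\mathcal O)\bmod(\QQ^*)^2=(x_1^2-q)^2\bmod(\QQ^*)^2=1$, so $\phi$ sends the identity to the identity.

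Next I would locate the relevant torsion point. Writing $D=(S_1)+(S_2)-(R_1)-(R_2)$, the identities $R_1=S_1+R$ and $R_2=S_2+R$ from Proposition \ref{prop:3} give, under the group law,
\[
[D]=S_1+S_2-R_1-R_2=-2R=2R=:V,
\]
a point of order two (since $R$ has order four), rational because $R$ is. Thus $\div g=2D$ with $[D]=V\in E_m[2](\QQ)$. Let $\xi$ denote the abscissa on $E_m$ and $f_V=\xi-\xi(V)$, so that $\div f_V=2(V)-2(\mathcal O)$ and $f_V\in\QQ(E_m)$. Then $\div(g/f_V)=2D'$ with $D'=D-(V)+(\mathcal O)$ and $[D']=V-V=\mathcal O$, so $D'$ is principal. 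Since every $\sigma\in\Gal$ either fixes $S_1,S_2,R_1,R_2$ or swaps $S_1\leftrightarrow S_2$ and $R_1\leftrightarrow R_2$, the divisor $D'$ is defined over $\QQ$; arguing as in the proof of Proposition \ref{prop:4} (via Theorem 7.8.3 of \cite{galbraith2012mathematics}) I would conclude that $g=c\,f_V\,\ell^2$ for some $\ell\in\QQ(E_m)$ and $c\in\QQ^*$.

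Consequently $\phi(P)\equiv c\cdot\delta_V(P)\bmod(\QQ^*)^2$, where $\delta_V(P)=\xi(P)-\xi(V)$ is the $2$-descent map associated to the rational $2$-torsion point $V$. This map is a homomorphism by the classical argument: if $P_1+P_2+P_3=\mathcal O$ lie on a line $W=\lambda\xi+\nu$, then the $\xi(P_i)$ are the roots of a monic cubic whose value at $\xi(V)$ is $-(\lambda\,\xi(V)+\nu)^2$, whence $\prod_i\big(\xi(P_i)-\xi(V)\big)=(\lambda\,\xi(V)+\nu)^2$ is a square; together with $\delta_V(-P)=\delta_V(P)$ this yields $\delta_V(P+Q)\equiv\delta_V(P)\delta_V(Q)$ (cf.\ \cite[X.1]{silverman2009arithmetic}). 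It follows that $\phi(P+Q)\,\phi(P)^{-1}\phi(Q)^{-1}\equiv c$ for all $P,Q$. The crux, and the step I expect to be the main obstacle, is to show that $c\in(\QQ^*)^2$. Here Proposition \ref{prop:4} is decisive: on the one hand $\phi([2]W)=g([2]W)\bmod(\QQ^*)^2=h(W)^2\bmod(\QQ^*)^2=1$, while on the other hand $\phi([2]W)\equiv c\,\delta_V(W)^2\equiv c$. Hence $c$ is a square, and therefore $\phi(P+Q)\equiv\phi(P)\phi(Q)$, which is the first assertion.

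The ``in particular'' statement then follows at once: if $P\equiv Q\bmod 2E_m(\QQ)$, write $P=Q+2W$ with $W\in E_m(\QQ)$, so that by the homomorphism property and Proposition \ref{prop:4}
\[
g(P)\equiv g(Q)\,g([2]W)=g(Q)\,h(W)^2\equiv g(Q)\bmod(\QQ^*)^2.
\]
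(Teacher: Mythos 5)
Your proposal is essentially correct, but it takes a genuinely different route from the paper. The paper never factors $g$: it takes the function $h$ with $h^2=g\circ[2]$ from Proposition \ref{prop:4}, chooses $P',Q'$ with $2P'=P$, $2Q'=Q$, and runs a Galois-cocycle computation in the style of Silverman III.8 (using that $X\mapsto h(X+T)/h(X)$ is constant for $T\in E_m[2]$) to show that $h(P'+Q')/(h(P')h(Q'))$ is fixed by every $\sigma\in\Gal$, hence rational; squaring gives the congruence. You instead compute the divisor class: by Proposition \ref{prop:3}, $[D]=S_1+S_2-R_1-R_2=-2R=2R=:V$ (in fact $V=(0,0)$ on $E_m$, since $x(2R)=\bigl(\frac{m^2-m^2}{4mq}\bigr)^2=0$, so your $f_V$ is just the coordinate function $T$), whence $g=c\,f_V\,\ell^2$ with $\ell\in\QQ(E_m)$, $c\in\QQ^*$, and the multiplicativity is inherited from the classical $2$-descent map $\delta_V$ attached to the rational $2$-torsion point $V$. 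This is a nice observation: it shows that the ``simple factorization mod $(\QQ^*)^2$'' which the authors, in the remark following Theorem \ref{thm:6}, say distinguished their situation from Dujella--Kazalicki's does in fact exist for this $g$ as well; in your argument Proposition \ref{prop:4} is demoted from the engine of the proof to a device for normalizing the constant $c$. One can sanity-check your factorization against known values: $g(R)=m=x(R)$, $g(S)=(x_1^2-q)^2\equiv 1\equiv x(S)$, consistent with $c\equiv 1$.

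There is, however, one step you should tighten. The congruence $\phi(P)\equiv c\,\delta_V(P)$ comes from evaluating the identity $g=c\,f_V\,\ell^2$ at $P$, which is legitimate only for $P\notin\{\mathcal{O},V\}$: at these two points $f_V\,\ell^2$ is an indeterminate form (pole times zero) and $\delta_V$ takes its conventional values $1$ and $m^2$. Such points genuinely occur in the theorem's scope (e.g.\ $Q=-P$, or $P+Q=V$), and your determination of $c$ via $\phi([2]W)$ silently assumes a rational $W$ with $2W\notin\{\mathcal{O},V\}$, which can fail when $E_m(\QQ)$ has rank $0$ and its torsion lies in $[2]^{-1}\{\mathcal{O},V\}$ (e.g.\ $E_m(\QQ)=\langle R\rangle$). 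Both issues are repaired by a single local computation: expanding at $\mathcal{O}$ in a rational uniformizer $t$, one has $f_V=\xi\sim t^{-2}$ with leading coefficient $1$ and $\ell=\ell_0 t+O(t^2)$ with $\ell_0\in\QQ^*$, so $g(\mathcal{O})=c\,\ell_0^2$, i.e.\ $c\equiv g(\mathcal{O})=(x_1^2-q)^2\equiv 1$. This pins down $c$ without any auxiliary rational point (making Proposition \ref{prop:4} unnecessary for this purpose) and simultaneously shows the conventional value $\delta_V(\mathcal{O})=1$ is compatible with your congruence; the other exceptional point is handled by the direct evaluation $g(V)=(x_1^2-q)^2\equiv 1\equiv c\,m^2=c\,\delta_V(V)$, using $f^{-1}(V)=(-x_1,-y_1)$. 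Finally, your well-definedness remark assumes $q$ is not a perfect square; for $q=1$ (which the paper allows) the support of $\div g$ is rational and the statement must be read away from it --- but the paper's own proof carries the same implicit restriction, so this does not tell against your approach.
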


\proof
Let $P',Q'\in E_m(\Qbar)$ such that $2P'=P$ and $2Q'=Q.$ We prove that
$$\frac{\sigma(h(P'+Q'))}{h(P'+Q')}=\frac{\sigma(h(P'))}{h(P')}\frac{\sigma(h(Q'))}{h(Q')}.$$

Following Silverman \cite[III.8]{silverman2009arithmetic}, assume $T\in E_m[2].$ From Proposition \ref{prop:4} it follows that $\displaystyle h^2(X+T)=g\circ [2] (X+T)=g\circ [2] (X)=h^2(X),$ for every $X\in E_m.$ This means that $\frac{h(X+T)}{h(X)}\in\{\pm 1\}.$ The morphism
$$ E_m \rightarrow \mathbb{P}^1, \qquad X\mapsto \frac{h(X+T)}{h(X)}$$
is not surjective, so by \cite[II.2.3]{silverman2009arithmetic} it must be constant.

For $ \sigma \in \Gal$ we have $\sigma(P')-P' \in E_m[2],\sigma(Q')-Q' \in E_m[2]$ and $\sigma(P'+Q')-(P'+Q') \in E_m[2].$ This holds since $2P'=P\in E_m(\QQ)$ and $2Q'=Q\in E_m(\QQ).$ Now we get 
$$\frac{\sigma(h(P'))}{h(P')}=\frac{h(\sigma(P'))}{h(P')}=\frac{h(P'+(\sigma(P')-P'))}{h(P')}=\frac{h(X+(\sigma(P')-P'))}{h(X)}.$$ Similarly $$\frac{\sigma(h(Q'))}{h(Q')}=\frac{h(X+(\sigma(Q')-Q'))}{h(X)},\quad \frac{\sigma(h(P'+Q'))}{h(P'+Q')}=\frac{h(X+(\sigma(P'+Q')-(P'+Q')))}{h(X)}.$$

Now
\begin{align*}
\frac{\sigma(h(P'+Q'))}{h(P'+Q')}&=\frac{h(X+(\sigma(P'+Q')-(P'+Q')))}{h(X)}\\
&=\frac{h(X+(\sigma(P'+Q')-(P'+Q')))}{h(X+\sigma(P')-P')}\frac{h(X+\sigma(P')-P')}{h(X)}\\
&=\frac{\sigma(h(Q'))}{h(Q')}\frac{\sigma(h(P'))}{h(P')}
\end{align*} 
by plugging in $X=P'+Q'-\sigma(P')$ for the first $X$ and $X=P'$ for the second one. This leads to $$\frac{h(P'+Q')}{h(P')h(Q')}=\frac{\sigma(h(P'+Q'))}{\sigma(h(Q'))\sigma(h(P'))}=\sigma\left(\frac{h(P'+Q')}{h(P')h(Q')}\right)$$ for every $\sigma\in \Gal.$ Now we conclude $$\frac{h(P'+Q')}{h(P')h(Q')}\in \QQ\implies h^2(P'+Q')\equiv h^2(P')h^2(Q') \mod (\QQ^*)^2.$$ Finally $$g(P+Q)=g\circ[2](P'+Q')= h^2(P'+Q')\equiv h^2(P')h^2(Q')=g(P)g(Q) \mod (\QQ^*)^2.$$

The second statement of the theorem follows easily from the first.\linebreak If $P=Q+2S_3,$ with $S_3\in E_m(\QQ),$ then $$g(P)=g(Q+2S_3)\equiv g(Q)g(S_3)^2\equiv g(Q) \mod (\QQ^*)^2. $$
\endproof

Theorem \ref{thm:6} was more difficult to prove compared to a similar statement in \cite[2.4.]{dujella2017more}. Their version of the function $g$ had a very simple factorization$\mod (\QQ^{\ast})^2,$ allowing them to use the $2-$descent homomorphism.

\section{Proofs of main theorems}\label{sec:3}

The main difficulty in the following proof is the issue of rationality of the quadruple. As we have mentioned, Theorem \ref{thm:6} will deal with this.

\emph{Proof of Theorem \ref{thm:1}:} From the assumptions on $(Q_1,Q_2,Q_3)$ we know \linebreak $(y_1^2-q)g(Q_1+Q_2+Q_3)$ is a square. We have
\begin{align*}
a^2&=\frac{g(Q_1)g(Q_2)g(Q_3)}{(x_1^2-q)^3m}=\frac{g(Q_1)g(Q_2)g(Q_3)(y_1^2-q)}{(x_1^2-q)^4(y_1^2-q)^2} \\
&\equiv g(Q_1+Q_2+Q_3)(y_1^2-q) \mod (\QQ^*)^2.
\end{align*}

The equivalence is a direct application of Theorem \ref{thm:6}.
This implies $a^2$ is a rational square so $a$ is rational, which in turn implies $b,c$ and $d$ are rational numbers, as noted in the introduction. Since $abcd=m\neq 0,$ none of the numbers $ a,b,c,d$ are zero, and the non-degeneracy criteria of $(Q_1,Q_2,Q_3)$ ensure that $a,b,c,d$ are pairwise different. Lastly, $ab+q=(x\circ f^{-1}(Q_1))^2$ (with similar equalities holding for other pairs of the quadruple). The previous statements prove the quadruple $(a,b,c,d)$ is a rational $D(q)$-quadruple. 

On the other hand, if $(a,b,c,d)$ is a rational $D(q)$-quadruple, then we can define the points $(Q_1,Q_2,Q_3)\in E_m(\QQ)^3$ in correspondence to $(a,b,c,d).$ Using the same identities$\mod (\QQ^*)^2$ as above, we get that $$(y_1^2-q)g(Q_1+Q_2+Q_3)\equiv a^2~~ \mod (\QQ^*)^2. $$ $\hfill \square$

To prove Theorem \ref{thm:2} we use the following lemma:
\begin{lemma}\label{lemma:7} Let $(a,b,c,d)$ be a rational $D(q)$-quadruple such that $abcd=m.$ There exists a point $(x_0,y_0)\in \mathcal{D}_m(\QQ),$ such that $x_0^2-q$ is a rational square.
\end{lemma}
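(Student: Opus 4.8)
The plan is to realize the desired point as the image under $f^{-1}$ of a single well-chosen rational point of $E_m$, exploiting that $g$ is a homomorphism from $E_m(\QQ)$ into $\QQ^*/(\QQ^*)^2$ (Theorem \ref{thm:6}). The guiding reformulation is this: by the very definition $g(P)=(x_1^2-q)\bigl((x\circ f^{-1}(P))^2-q\bigr)$, a rational affine point $(x_0,y_0)\in\DD(\QQ)$ with $f(x_0,y_0)=P$ satisfies that $x_0^2-q=g(P)/(x_1^2-q)$ is a rational square \emph{exactly} when $g(P)\equiv x_1^2-q \mod (\QQ^*)^2$. Thus it suffices to produce a point $P\in E_m(\QQ)$ lying in the class of $x_1^2-q$ under $g$.

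First I would record the needed values of $g$. The given quadruple $(a,b,c,d)$ determines $Q_1,Q_2,Q_3\in E_m(\QQ)$ with $g(Q_1)=(x_1^2-q)\,ab$, $g(Q_2)=(x_1^2-q)\,ac$, $g(Q_3)=(x_1^2-q)\,ad$, precisely as in Theorem \ref{thm:1}. Multiplying these and reducing modulo squares, while using $abcd=m$, yields $g(Q_1)g(Q_2)g(Q_3)=(x_1^2-q)^3a^3bcd=(x_1^2-q)^3a^2m\equiv (x_1^2-q)m$. Next, since the torsion point $R=(m,2mq)$ corresponds to $(-y_1,x_1)$ on $\DD$ by \eqref{ness}, the definition of $g$ together with $(x_1,y_1)\in\DD$ gives $g(R)=(x_1^2-q)(y_1^2-q)=m$.

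Now set $P:=Q_1+Q_2+Q_3+R\in E_m(\QQ)$. Applying Theorem \ref{thm:6} four times,
\[
g(P)\equiv g(Q_1)g(Q_2)g(Q_3)g(R)\equiv (x_1^2-q)m\cdot m=(x_1^2-q)m^2\equiv x_1^2-q \mod (\QQ^*)^2,
\]
which is exactly the congruence sought. Taking $(x_0,y_0):=f^{-1}(P)$, the identity $x_0^2-q=g(P)/(x_1^2-q)$ then exhibits $x_0^2-q$ as a rational square.

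The step requiring care, and the main obstacle, is confirming that $(x_0,y_0)$ is a genuine affine rational point of $\DD$, i.e.\ that $P$ is neither a pole nor a zero of $g$. By Proposition \ref{prop:3} the poles of $g$ are $R_1,R_2$ and its zeros are $S_1,S_2$, all with coordinates that involve $\sqrt q$ nontrivially; for squarefree nonsquare $q$ (with $x_1,y_1\neq 0$) these four points lie in $E_m(\QQ(\sqrt q))\setminus E_m(\QQ)$. Since $P\in E_m(\QQ)$, it avoids all of them, so $g(P)$ is a finite nonzero rational and $f^{-1}(P)\in\DD(\QQ)$ is affine, as the non-affine points of $\Dbar$ correspond exactly to $R_1,R_2,S_1,S_2$. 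The perfect-square case $q=1$ reduces to the classical Diophantine setting and is handled separately; alternatively, should $P$ accidentally meet a bad point, one replaces $P$ by $P+2S$ for suitable $S\in E_m(\QQ)$, which by Theorem \ref{thm:6} leaves the class of $g(P)$ unchanged.
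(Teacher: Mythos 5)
Your proof is correct and is essentially the paper's own argument in different packaging: the paper also takes the point $f^{-1}(Q_1+Q_2+Q_3+R)$, deriving the square condition from Theorem~\ref{thm:1} together with the curve identity $(x^2-q)(y^2-q)=m$ and the coordinate swap \eqref{ness}, which amounts to exactly your computation $g(Q_1+Q_2+Q_3+R)\equiv g(Q_1)g(Q_2)g(Q_3)g(R)\equiv(x_1^2-q)m^2 \mod (\QQ^*)^2$. Your closing worry about affineness/rationality of $f^{-1}(P)$ is legitimate (and glossed over in the paper), but it resolves more simply than your $q=1$ patch suggests: the congruence $g(P)\equiv x_1^2-q \mod (\QQ^*)^2$ already forces $g(P)\in\QQ^*$, so $P$ avoids the zeros and poles $S_1,S_2,R_1,R_2$ of $g$ and $f^{-1}(P)$ is an affine rational point of $\DD$.
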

\begin{proof}
From Theorem \ref{thm:1} we know that $(y_1^2-q)g(Q_1+Q_2+Q_3)$ is a square, where $(Q_1,Q_2,Q_3)\in E_m(\QQ)^3$ is the triple that corresponds to the quadruple $(a,b,c,d)$. 

Let $Q=Q_1+Q_2+Q_3.$ We have
\begin{align*}
(y_1^2-q)g(Q)&=(y_1^2-q)(x_1^2-q)((x\circ f^{-1}(Q))^2-q)=m\cdot (x\circ f^{-1}(Q))^2-q)\\
&=m\cdot \frac{m}{(y\circ f^{-1}(Q))^2-q}=m^2\frac{1}{(y\circ f^{-1}(Q))^2-q}.
\end{align*} 

Since the left hand side is a square, we conclude $(y\circ f^{-1}(Q))^2-q$ is a square as well. Now define $(x_0,y_0):=f^{-1}(Q+R).$ We know that 
\[
(y\circ f^{-1}(Q))^2-q\stackrel{(\ref{ness})}{=}(x\circ f^{-1}(Q+R))^2-q=x_0^2-q
\] so the claim follows.
\end{proof}

\emph{Proof of Theorem \ref{thm:2}:} 
Assume we have a rational $D(q)$-quadruple. Using Lemma \ref{lemma:7} there exists a point $(x_0,y_0)\in \mathcal{D}_m(\QQ)$ such that $x_0^2-q$ is a rational square. Since $x_0^2-q=k^2,$ then $q=x_0^2-k^2=(x_0-k)(x_0+k).$ Denote $u=x_0-k,$ then $x_0+k=q/u$ and by adding the previous two equalities together to eliminate $k,$ we get $x_0=\frac{q+u^2}{2u}.$ Denoting $t=y_0$ we get $$m=(x_0^2-q)(y_0^2-q)=\left(\left(\frac{q+u^2}{2u}\right)^2-q\right)(t^2-q)=\left(\frac{q-u^2}{2u}\right)^2(t^2-q).$$

Now, let $m=\left(\frac{q-u^2}{2u}\right)^2(t^2-q)$ for some rational $(t,u).$ Denote $y_1=t, x_1=\frac{q+u^2}{2u}.$ It is easy to check that $(x_1^2-q)(y_1^2-q)=m,$ so there is a rational point $(x_1,y_1)\in\mathcal{D}_m(\QQ)$ such that $x_1^2-q=\left(\frac{u^2-q}{2u}\right)^2$ is a square. We use this point $(x_1,y_1)=\colon P_1$ to define the map $f:\mathcal{D}_m\to E_m.$ Let $Q_1=R+S, Q_2=2S$ and $Q_3=3S.$ The sets $G\cdot Q_i$ are disjoint and $g(Q_1+Q_2+Q_3)(y_1^2-q)=g(R+6S)(y_1^2-q)\equiv g(R)(y_1^2-q)=((x_1^2-q)(y_1^2-q))(y_1^2-q)$ mod $(\QQ^*)^2$ is a rational square. The points $(Q_1,Q_2,Q_3)$ satisfy the conditions of Theorem \ref{thm:1} giving us a rational $D(q)$ quadruple.$\hfill \square$

\begin{remark}\label{rem:8} The condition $m=\left(\frac{q-u^2}{2u}\right)^2(t^2-q)$ is equivalent to the fact that there exists $(x_0,y_0)\in\mathcal{D}_m(\QQ)$ such that $x_0^2-q$ is a square. This was proven in the preceding theorems.
\end{remark}

\section{Examples}\label{sec:4}

There are plenty of examples where $m=(x_1^2-q)(y_1^2-q)$ for some rational $x_1$ and $y_1,$ such that there does not exist a rational $D(q)$-quadruple with product $m.$ Equivalently, $m$ cannot be written as $(x_0^2-q)(y_0^2-q)$ such that $x_0^2-q$ is a square.

According to Theorem \ref{thm:1}, to find out whether there is a rational $D(q)$-quadruple with product $m,$ one needs to check whether there is a point $T'\in E_m(\QQ)$ such that $g(T')(y_1^2-q)$ is a square. Theorem \ref{thm:6} tells us that we only need to check the points $T \in E_m(\QQ)/2E_m(\QQ),$ which is a finite set. If for some explicit $q,m$ we know the generators of the group $E_m(\QQ) / 2E_m(\QQ),$ we can determine whether there exist rational $D(q)$-quadruples with product $m,$ and parametrize them using points on $E_m(\QQ).$ For such computations we used Magma. 

Let $q=3, x_1=5$ and $y_1=7$ making $m=(5^2-3)(7^2-3)=1012.$ The rank of $E_m$ is two, $E_m$ has one torsion point of order four, giving us in eight points in total to check. None of the points $T \in E_m(\QQ)/2E_m(\QQ)$ satisfy that $g(T)(y_1^2-q)$ is a square, so there are no $D(3)$-quadruples with product $1012.$

On the other hand, take $q=-3,x_1=1$ so that $x_1^2-q=4$ and let $y_1=t$ which makes $m=4\cdot(t^2+3).$ The point $S$ is a point of infinite order on $E_m(\QQ(t)),$ and the triple $(Q_1,Q_2,Q_3)=(S+R,2S,3S)$ satisfies the conditions of Theorem \ref{thm:1}. We obtain the following family:\\
\begin{align*}
a&=\frac{2\cdot (3 + 6 t^2 + 7 t^4)\cdot (27+162t^2+801t^4+1548t^6+1069t^8+306t^{10}+183t^{12})}{(3 + t^2)\cdot (1 + 3 t^2)\cdot (9+9t^2+19t^4+27t^6)\cdot(3+27t^2+33t^4+t^6)}, \\
b&=\frac{(3 + t^2)^2\cdot (1 + 3 t^2)\cdot (9+9t^2+19t^4+27t^6)\cdot(3+27t^2+33t^4+t^6)}{2\cdot (3 + 6 t^2 + 7 t^4)\cdot (27+162t^2+801t^4+1548t^6+1069t^8+306t^{10}+183t^{12})},\\
c&=\frac{2\cdot (3 + 6 t^2 + 7 t^4)\cdot(3 + 27 t^2 + 33 t^4 + t^6)\cdot (9 + 9 t^2 + 19 t^4 + 27 t^6)}{(3 + t^2)\cdot (1 + 3 t^2)\cdot(27 + 162 t^2 + 801 t^4 + 1548 t^6 + 1069 t^8 + 
   306 t^{10} + 183 t^{12})},\\
d&=\frac{2\cdot(3 + t^2)\cdot (1 + 3 t^2)\cdot(27 + 162 t^2 + 801 t^4 + 1548 t^6 + 1069 t^8 + 
   306 t^{10} + 183 t^{12})}{(3 + 6 t^2 + 7 t^4)\cdot(3 + 27 t^2 + 33 t^4 + t^6)\cdot (9 + 9 t^2 + 19 t^4 + 27 t^6)}.
\end{align*}
We can generalize the example above by setting $q=q, y_1=t, x_1=\frac{q+u^2}{2u}.$ The triple of points $(S+R,2S,3S)$ satisfies the conditions of Theorem \ref{thm:1} and we can calculate an explicit family of rational $D(q)$-quadruples with product $m,$ but this example is too large to print (the numerator of $a$ is a polynomial in the variables $(q,t,u)$ of degree forty).

All the computations in this paper were done in Magma \cite{bosma1997magma}.
\section{Acknowledgements}

The authors were supported by the Croatian Science Foundation under the project no.~1313. \\
The second author was supported by the QuantiXLie Centre of Excellence, a project cofinanced by the Croatian Government and the European Union through the European Regional Development Fund - the Competitiveness and Cohesion Operational Programme (Grant KK.01.1.1.01.0004).

\bibliography{Rational_D_q_quadruples_indagationes}

\begin{thebibliography}{11}
\providecommand{\natexlab}[1]{#1}
\providecommand{\url}[1]{\texttt{#1}}
\expandafter\ifx\csname urlstyle\endcsname\relax
  \providecommand{\doi}[1]{doi: #1}\else
  \providecommand{\doi}{doi: \begingroup \urlstyle{rm}\Url}\fi

\bibitem[Bosma et~al.(1997)Bosma, Cannon, and Playoust]{bosma1997magma}
W.~Bosma, J.~Cannon, and C.~Playoust.
\newblock {The Magma algebra system I: The user language}.
\newblock \emph{Journal of Symbolic Computation}, 24\penalty0 (3-4):\penalty0
  235--265, 1997.

\bibitem[Dujella(2000)]{dujella2000note}
A.~Dujella.
\newblock {A note on Diophantine quintuples}.
\newblock \emph{Algebraic Number Theory and Diophantine Analysis (F.
  Halter-Koch, RF Tichy, eds.), Walter de Gruyter, Berlin}, pages 123--127,
  2000.

\bibitem[Dujella(2004)]{dujella2004there}
A.~Dujella.
\newblock {There are only finitely many Diophantine quintuples}.
\newblock \emph{Journal fur die Reine und Angewandte Mathematik}, 2004\penalty0
  (566):\penalty0 183--214, 2004.

\bibitem[Dujella and Fuchs(2012)]{dujella2012problem}
A.~Dujella and C.~Fuchs.
\newblock {On a problem of Diophantus for rationals}.
\newblock \emph{Journal of number theory}, 132\penalty0 (10):\penalty0
  2075--2083, 2012.

\bibitem[Dujella and Kazalicki(2016)]{dujella2016diophantine}
A.~Dujella and M.~Kazalicki.
\newblock Diophantine m-tuples in finite fields and modular forms.
\newblock \emph{arXiv preprint arXiv:1609.09356}, 2016.

\bibitem[Dujella and Kazalicki(2017)]{dujella2017more}
A.~Dujella and M.~Kazalicki.
\newblock {More on Diophantine sextuples}.
\newblock In \emph{Number Theory--Diophantine Problems, Uniform Distribution
  and Applications}, pages 227--235. Springer, 2017.

\bibitem[Dujella et~al.(2017)Dujella, Kazalicki, Miki{\'c}, and
  Szikszai]{dujella2017there}
A.~Dujella, M.~Kazalicki, M.~Miki{\'c}, and M.~Szikszai.
\newblock {There are infinitely many rational Diophantine sextuples}.
\newblock \emph{International Mathematics Research Notices}, 2017\penalty0
  (2):\penalty0 490--508, 2017.

\bibitem[Galbraith(2012)]{galbraith2012mathematics}
S.~D. Galbraith.
\newblock \emph{Mathematics of public key cryptography}.
\newblock Cambridge University Press, 2012.

\bibitem[Gibbs(2006)]{gibbs2006some}
P.~Gibbs.
\newblock {Some rational Diophantine sextuples}.
\newblock \emph{Glasnik matemati{\v{c}}ki}, 41\penalty0 (2):\penalty0 195--203,
  2006.

\bibitem[He et~al.(2019)He, Togb{\'e}, and Ziegler]{he2019there}
B.~He, A.~Togb{\'e}, and V.~Ziegler.
\newblock {There is no Diophantine quintuple}.
\newblock \emph{Transactions of the American Mathematical Society},
  371\penalty0 (9):\penalty0 6665--6709, 2019.

\bibitem[Silverman(2009)]{silverman2009arithmetic}
J.~H. Silverman.
\newblock \emph{The arithmetic of elliptic curves}, volume 106.
\newblock Springer Science \& Business Media, 2009.

\end{thebibliography}

\end{document}